\newcommand{\subject}[1]{\begin{flushleft}\textbf{2010 Mathematics  Subject Classification}: #1\end{flushleft}}
\newcommand{\keyword}[1]{\par\noindent \textbf{Keywords:} #1 }
\newcommand{\university}[1]{\\[3mm]{\small #1}}
\newtheorem{defi}{Definition}[section]
\newtheorem{theorem}[defi]{Theorem}
\title{Symmetry Reduction of the Two-Dimensional Ricci Flow Equation }
\author{Mehdi Nadjafikhah
\thanks{Corresponding author. School of Mathematics, Iran University of Science and Technology, Narmak, Tehran 1684613114, Iran. E-mail: m\underline{ }nadjafikhah@iust.ac.ir}
\university{}
 \and
 Mehdi Jafari
 \thanks{Department of Complementary Education, Payame Noor University, PO BOX 19395-3697, Tehran, Iran. E-mail: m.jafari@phd.pnu.ac.ir}
 \university{} }
\date{\today}
\begin{document}
\maketitle
\begin{abstract}
This paper is devoted to obtain the one-dimensional group invariant solutions of the two-dimensional Ricci flow ($(2D)$ Rf) equation. By classifying the orbits of the adjoint representation, the optimal system of one-dimensional subalgebras of the  ($(2D)$ Rf) equation is obtained. For each class, we will find the reduced equation by method of similarity reduction. By solving these reduced equations we will obtain new sets of group invariant solutions for the ($(2D)$ Rf) equation.

 \keyword{Lie symmetry group, two-dimensional Ricci flow equation, optimal system, group invariant solution}
 \subject{70G65, 34C14, 53C44}
\end{abstract}
\section{Introduction}
The Ricci flow was introduced by Hamilton in his seminal paper, ``Three-manifolds with positive Ricci curvature" in 1982 \cite{Ham1}. Since then, Ricci flow has been a very useful tool for studying the special geometries which a manifold admits. Ricci flow is an evolution equation for a Riemannian metric which sometimes can be used in order to deform an arbitrary metric into a suitable metric that can specify the topology of the underlying manifold. If $(M,g(t))$ be a smooth Riemannian manifold, Ricci flow is defined by the equation
\begin{eqnarray}
\frac{\partial}{\partial t}g(t)=-2Ric,
\end{eqnarray}
where $Ric$ denotes the Ricci tensor of the metric $g$.
By using the concept of Ricci flow, Grisha Perelman completely proved the Poincar$\acute{\mbox{e}}$ conjecture around 2003 [12-14].
The Ricci flow also is used as an approximation to the renormalization group flow for the two-dimensional nonlinear $\sigma$-model, in quantum field theory. see \cite{Gaw} and references therein.

In this paper we want to obtain new solutions of the ($(2D)$ Rf) equation by method of Lie symmetry group. As it is well known, the Lie symmetry group method has an important role in the analysis of differential equations. The theory of Lie symmetry groups of differential equations was developed by Sophus Lie \cite{Lie}. By this method we can reduce the order of ODEs and investigate the invariant solutions. Also we can construct  new solutions from known ones  (for more details about the applications of Lie symmetries, see \cite{Olv,Blucol,BluKum}).
Lei's method led to an algorithmic approach to find special solution of differential equation by its symmetry group. These solutions are called group invariant solutions and  obtain by solving the reduced system of differential equation having fewer independent variables than the original system. Bluman and Cole generalized the Lie's symmetry method for finding the group-invariant solutions \cite{BluCol}.
In this paper we apply this method to obtain the invariant solutions of ($(2D)$ Rf) equation and classify them.

This paper is organized as follows. In section 2, by using the mechanical model of Ricci flow, Lie symmetries of ($(2D)$ Rf) equation will be state and some results yield from the structure of the Lie algebra of the Lie symmetry group. In section 3, we will construct an optimal system of one-dimensional subalgebras of the ($(2D)$ Rf) equation which is useful for classifying of group invariant solutions. In section 4, the reduced equation for each element of optimal system is obtained. In section 5, we will solve the reduced equations by method of Lie symmetry group and obtain the group invariant solutions of ($(2D)$ Rf) equation.

\section{Lie Symmetries of ($(2D)$ Rf) Equation}
As we know, transformations which map solutions of a differential equation to other solutions are called symmetries of the equation. In \cite{Olv} Olver has introduced the procedure of finding Lie symmetry group of a differential  equation. Cimpoiaus and Constantinescu have introduced the mechanical model for the Ricci flow as follows:
 \begin{eqnarray}\label{2}
u^2u_t+u_yu_x-uu_{xy}=0
\end{eqnarray}
and obtained the Lie symmetry group of this equation \cite{Cim}. They proved that this equation admits a 6-parameter Lie group, $G$, with the following infinitesimal generators for its Lie algebra, $\mathfrak{g}$.
 \begin{eqnarray}\label{3}
\begin{array}{lclclclclc}
X_1=\partial_x, &&X_2=\partial_y, && X_3=\partial_t,\\
 X_4=t\partial_t+u\partial_u, &&X_5=x\partial_x-u\partial_u, &&X_6=y\partial_y-u\partial_u.
\end{array}
\end{eqnarray}
The commutator table of Lie algebra for  $\mathfrak{g}$ is given below, where the entry in the $i^{th}$ row and $j^{th}$ column is $[X_{i},X_{j}]=X_{i}X_{j}-X_{j}X_{i}$, $i,j=1,...,6$.
\begin{table}[h]
\begin{center}
{\small \textbf{Table 1:} The commutator table of $\mathfrak{g}$.}

\begin{tabular}{ccccccccccccc}
\cline{1-13}
${[\,,\,]}$ && $X_1$  && $X_2$    && $X_3$    && $X_4$  && $X_5$ && $X_6$  \\
\cline{1-13}
$X_1$ && $0$      && $0$    && $0$      && $0$     && $X_1$  && $0$   \\
$X_2$ && $0$      && $0$    && $0$      && $0$     && $0$  && $X_2$   \\
$X_3$ && $0$      && $0$     && $0$     && $X_3$  && $0$ && $0$   \\
$X_4$ && $0$      && $0$     && $-X_3$ && $0$     && $0$ && $0$   \\
$X_5$ && $-X_1$  && $0$     && $0$     && $0$     && $0$&&  $0$  \\
$X_6$ && $0$       && $-X_2$&& $0$     && $0$     && $0$ && $0$  \\
\cline{1-13}
\end{tabular}
\end{center}
\end{table}

Exponentiating the infinitesimal symmetries (\ref{3}) we obtain the one-parameter groups $g_k(s)$ generated by $X_k$, $k=1,...,6$:
 \begin{eqnarray}
\begin{array}{lclcl}
g_1(s)&:&(x,y,t,u)&\longmapsto&(x+s,y,t,u),\\ g_2(s)&:&(x,y,t,u)&\longmapsto&(x,y+s,t,u),\\ g_3(s)&:&(x,y,t,u)&\longmapsto&(x,y,t+s,u),\\ g_4(s)&:&(x,y,t,u)&\longmapsto&(x,y,te^s,ue^s),\\ g_5(s)&:&(x,y,t,u)&\longmapsto&(xe^s,y,t,ue^{-s}),\\ g_6(s)&:&(x,y,t,u)&\longmapsto&(x,ye^s,t,ue^{-s}),\\ \end{array}
\end{eqnarray}
Consequently, we can state the following theorem:
\begin{theorem}
If $f=f(x,y,t)$ is a solution of (\ref{2}), so are functions
 \begin{eqnarray}
\begin{array}{lclclcl}
g_1(s).f=f(x-s,y,t), && g_4(s).f=f(x,y,te^{-s})e^s,\\
g_2(s).f=f(x,y-s,t), && g_5(s).f=f(xe^{-s},y,t)e^{-s},\\
g_3(s).f=f(x,y,t-s), && g_6(s).f=f(x,ye^{-s},t)e^{-s}.
 \end{array}\end{eqnarray}
\end{theorem}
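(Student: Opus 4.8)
The plan is to verify directly that each transformed function $g_k(s).f$ satisfies equation~(\ref{2}), exploiting the fact that (\ref{2}) has constant coefficients and is invariant under the interchange $x\leftrightarrow y$. First I would record where the six formulas originate: each $g_k(s)$ is a point transformation $(x,y,t,u)\mapsto(\tilde x,\tilde y,\tilde t,\tilde u)$, and the induced action on a function is fixed by requiring $\tilde u=(g_k(s).f)(\tilde x,\tilde y,\tilde t)$ whenever $u=f(x,y,t)$; solving this relation for $g_k(s).f$ reproduces exactly the listed expressions. This reduces the theorem to six substitution checks.

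For the translation generators $X_1,X_2,X_3$ the verification is immediate. Writing $v=g_1(s).f=f(x-s,y,t)$, every partial derivative of $v$ equals the corresponding partial derivative of $f$ evaluated at the shifted point, so $v^2v_t+v_yv_x-vv_{xy}$ is literally $(f^2f_t+f_yf_x-ff_{xy})$ evaluated at $(x-s,y,t)$, which vanishes. The same argument handles $g_2(s)$ and $g_3(s)$, since no coefficient of (\ref{2}) depends explicitly on $x$, $y$ or $t$.

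The scaling generators $X_4,X_5,X_6$ require a short chain-rule computation, but each again collapses to a common overall factor times the original equation. For $v=g_4(s).f=e^{s}f(x,y,te^{-s})$ one finds $v_x=e^{s}f_x$, $v_y=e^{s}f_y$, $v_t=f_t$ and $v_{xy}=e^{s}f_{xy}$ (all evaluated at $(x,y,te^{-s})$), whence each of the three terms carries the factor $e^{2s}$ and the sum equals $e^{2s}(f^2f_t+f_yf_x-ff_{xy})=0$. For $v=g_5(s).f=e^{-s}f(xe^{-s},y,t)$ the analogous computation produces the common factor $e^{-3s}$, and $g_6(s)$ then follows from $g_5(s)$ by interchanging the roles of $x$ and $y$, under which (\ref{2}) is invariant.

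The only genuine subtlety, and the step I would be most careful about, is the derivative bookkeeping in the scaling cases: one must track which argument of $f$ is rescaled, apply the chain rule with the correct factors $e^{\pm s}$, and confirm that the powers of $e^{s}$ coming from $v^2$, from the product $v_yv_x$, and from $v\,v_{xy}$ all coincide so that a single factor can be pulled out. Alternatively, the entire statement is an immediate consequence of the general theory: because $X_1,\dots,X_6$ were already established as infinitesimal symmetries of (\ref{2}) via the prolongation criterion \cite{Olv,Cim}, the one-parameter groups $g_k(s)=\exp(sX_k)$ they generate necessarily carry solutions to solutions, and the direct computation above merely makes this abstract fact explicit.
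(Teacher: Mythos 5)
Your proposal is correct: the induced action on functions is derived properly from the point transformations, the chain-rule bookkeeping for $g_4$, $g_5$ gives the common factors $e^{2s}$ and $e^{-3s}$ as you state, and the reduction of $g_6$ to $g_5$ via the $x\leftrightarrow y$ symmetry of $u^2u_t+u_yu_x-uu_{xy}=0$ is legitimate since that expression is manifestly symmetric in $x$ and $y$. The paper itself supplies no proof at all: the theorem is introduced with ``Consequently, we can state the following theorem'' immediately after exponentiating the generators, i.e.\ it is treated as an instance of the general fact that one-parameter symmetry groups obtained from infinitesimal symmetries map solutions to solutions --- precisely the shortcut you mention in your closing paragraph. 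What your version buys is a self-contained, elementary verification that does not rest on the prolongation computation of Cimpoiasu and Constantinescu being correct, and it makes explicit where the multiplicative factors $e^{s}$ and $e^{-s}$ in the transformed solutions come from; what the paper's (implicit) route buys is brevity and the reassurance that no case-by-case check is needed once the symmetry algebra is known. Both are sound; yours is the more informative of the two.
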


\section{One-dimensional optimal system of subalgebras for the (($2$D) Rf) equation }
In this section, we obtain the one-dimensional optimal system of (($2$D) Rf) equation by using symmetry group. Since every linear combination of infinitesimal symmetries is an infinitesimal symmetry, there is an infinite number of one-dimensional subgroups for $G$. Therefore,  it's important to determine which subgroups give different types of solutions. For this, we must find invariant solutions which can not be transformed to each other  by symmetry transformations in the full symmetry group. This led to concept of an optimal system of subalgebra. For one-dimensional subalgebras, this classification problem is the same as the problem of classifying the orbits of the adjoint representation \cite{Olv}. Optimal set of subalgebras is obtaining from taking only one representative from each class of equivalent subalgebras. The  problem of classifying the orbits is solved by taking a general element in the lie algebra and simplify it as much as possible by imposing various adjoint transformation on it \cite{Ovs, Nad2}.
Adjoint representation of each $X_i$, $i=1,...,6$ is defined as follow:
\begin{equation}
\mbox{Ad}(\exp(s.X_i).X_j) =
X_j-s.[X_i,X_j]+\frac{s^2}{2}.[X_i, [X_i,X_j]]-\cdots,
\end{equation}
where $s$ is a parameter and  $[X_i,X_j]$  is the commutator of the Lie algebra for $i,j=1,\cdots,6$ (\cite{Olv},page 199).
Taking into account the table of commutator, we can compute all the adjoint representations corresponding to the Lie group of the (($2$D) Rf) equation. They are presented in Table 2. Not that, the $(i,j)$ entry indicate $\mbox{Ad}(\exp(s.X_i).X_j)$.

\begin{table}[h]
\begin{center}
{\small \textbf{Table 2:} The adjoint representation table of the infinitesimal generators $X_i$.}

\begin{tabular}{ccccccccccccc}
\cline{1-13}
$\mbox{Ad}$ && $X_1$  && $X_2$    && $X_3$    && $X_4$  && $X_5$ && $X_6$  \\
\cline{1-13}
$X_1$ && $X_1$      && $X_2$    && $X_3$      && $X_4$     && $X_5-sX_1$  && $X_6$   \\
$X_2$ && $X_1$      && $X_2$    && $X_3$      && $X_4$     && $X_5$  && $X_6-sX_2$   \\
$X_3$ && $X_1$      && $X_2$     && $X_3$     && $X_4-sX_3$  && $X_5$ && $X_6$   \\
$X_4$ && $X_1$      && $X_2$     && $e^sX_3$ && $X_4$     && $X_5$ && $X_6$   \\
$X_5$ && $e^sX_1$  && $X_2$     && $X_3$     && $X_4$     && $X_5$&&  $X_6$  \\
$X_6$ && $X_1$       && $e^sX_2$&& $X_3$     && $X_4$     && $X_5$ && $X_6$  \\
\cline{1-13}
\end{tabular}
\end{center}
\end{table}
Now we can state the following theorem:
\begin{theorem}\label{3.1}
A one-dimensional optimal system for Lie algebra of (($2$D) Rf) equation is given by
\begin{eqnarray}\label{7}
\begin{array}{lclcl}
1) X_4+aX_5+bX_6,&& 4) \varepsilon X_1+\varepsilon ' X_2+X_4,&& 7) \varepsilon X_1+ \varepsilon ' X_3+X_6,\\
2) \varepsilon X_2+X_4+aX_5,&& 5) \varepsilon X_3+X_5+aX_6, && 8) \varepsilon X_1+cX_2+ \varepsilon ' X_3,\\
3) \varepsilon X_1+X_4+aX_6,&& 6) \varepsilon X_2+\varepsilon ' X_3+X_5,\\
\end{array}
\end{eqnarray}
where  $\varepsilon$ and $ \varepsilon '$ are $\pm 1$ or zero. Also  $a,b,c\in\mathbb{R}$ and $a\neq 0, b\neq 0$.
\end{theorem}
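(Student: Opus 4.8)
The plan is to follow Olver's algorithm for classifying the orbits of the adjoint action, reading off all the structural data from Table 2. I would start from a generic element $X=\sum_{i=1}^{6}a_iX_i$ of $\mathfrak{g}$ and ask how far the maps $\mathrm{Ad}(\exp(sX_i))$ can simplify the coordinate vector $(a_1,\dots,a_6)$, keeping in mind that a one-dimensional subalgebra is a \emph{line}, so an overall nonzero rescaling of $X$ is also at our disposal.

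The first observation, which organizes the whole argument, is that in Table 2 the columns $X_4,X_5,X_6$ never feed back into the coefficients $a_4,a_5,a_6$: every adjoint map sends each of $X_4,X_5,X_6$ to itself plus multiples of $X_1,X_2,X_3$. Hence $a_4,a_5,a_6$ are invariant apart from the common projective scale, and in particular the pattern of which of them vanish is an adjoint invariant; the ratios among them are genuine moduli that no transformation can remove. Complementarily, the maps generated by $X_1,X_2,X_3$ act as translations, $a_1\mapsto a_1-sa_5$, $a_2\mapsto a_2-sa_6$, $a_3\mapsto a_3-sa_4$, while the maps generated by $X_4,X_5,X_6$ act as dilations $a_1\mapsto e^sa_1$, $a_2\mapsto e^sa_2$, $a_3\mapsto e^sa_3$. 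Thus whenever $a_5$ (respectively $a_6$, $a_4$) is nonzero we may annihilate $a_1$ (respectively $a_2$, $a_3$) by a translation, and any surviving $a_1,a_2,a_3$ can be scaled to $\pm1$ or $0$, the sign being preserved because $e^s>0$, with one chosen leading coefficient normalized to $1$ by the projective scale.

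With these two mechanisms I would split into four principal cases according to the invariant triple $(a_4,a_5,a_6)$. If $a_4\neq0$, normalize $a_4=1$ and translate $a_3$ to $0$; the subcases in which $a_5,a_6$ are both nonzero, exactly one is nonzero, or both vanish produce representatives $(1)$, then $(2)$ and $(3)$, and finally $(4)$, with the leftover coefficients normalized to $\pm1$ or to the ratio moduli $a,b$. If $a_4=0$ but $a_5\neq0$, normalize $a_5=1$, translate $a_1$ to $0$, and scale $a_3$; splitting on $a_6$ gives $(5)$ and $(6)$. If $a_4=a_5=0$ and $a_6\neq0$, normalize $a_6=1$, translate $a_2$ to $0$, and scale $a_1,a_3$, giving $(7)$. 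Finally, if $a_4=a_5=a_6=0$, no translations survive and only the three dilations act on $(a_1,a_2,a_3)$, yielding $(8)$; here the residual real parameter $c$ records the one ratio invariant that the available dilations cannot eliminate. In each branch the invariance of $a_4,a_5,a_6$ and of the signs under positive dilation shows that the displayed parameters are essential.

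I expect the genuine content to lie not in the reductions themselves, which are routine once the translation/dilation dictionary above is fixed, but in the two verifications that close the classification: \textbf{exhaustiveness}, that every element of $\mathfrak{g}$ falls into exactly one of the eight families after the prescribed sequence of transformations, and \textbf{inequivalence}, that no two of the listed representatives lie in a common adjoint orbit. The main obstacle is the latter, for which I would rely on the invariant ratios $a_4:a_5:a_6$ to separate the principal cases and on the sign invariants $\varepsilon,\varepsilon'$ (preserved because the only available rescalings are positive dilations together with the global projective factor) to separate representatives within a case. Careful bookkeeping is then needed to confirm that the residual moduli $a,b,c$ are forced and that no further normalization is possible, which is what pins down the optimal system to exactly the eight classes stated.
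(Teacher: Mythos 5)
Your proposal is correct and follows essentially the same route as the paper: read the translation action of $\mathrm{Ad}(\exp(sX_i))$ for $i=1,2,3$ and the dilation action for $i=4,5,6$ off Table 2, observe that $a_4,a_5,a_6$ are adjoint invariants up to the projective scale, and split into the eight cases according to which of them vanish. Your explicit flagging of the inequivalence verification via the invariant ratios and signs is a point the paper leaves implicit, but the underlying reduction argument is the same.
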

\begin{proof}
Let $F^s_i:\mathfrak{g}\to \mathfrak{g}$ be the adjoint transformation defined by
$X\mapsto\mbox{Ad}(\exp(sX_i).X)$, for
$i=1,\cdots,6$. The matrix of $F^s_i$, $i=1,\cdots,6$, with
respect to basis $\{X_1,\cdots,X_6\}$ is:
\begin{eqnarray*}
\begin{small}
\begin{array}{lclclclclclc}
 M_1^s\!=\!\!\!\left[ \begin{array}{cccccc}
1&0&0&0&-s&0\\0&1&0&0&0&0\\0&0&1&0&0&0\\0&0&0&1&0&0\\0&0&0&0&1&0\\0&0&0&0&0&1
\end{array} \right],&
 M_2^s\!=\!\!\!\left[ \begin{array}{cccccc}
1&0&0&0&0&0\\0&1&0&0&0&-s\\0&0&1&0&0&0\\0&0&0&1&0&0\\0&0&0&0&1&0\\0&0&0&0&0&1
\end{array} \right],&
M_3^s\!=\!\!\!\left[ \begin{array}{cccccc}
1&0&0&0&0&0\\0&1&0&0&0&0\\0&0&1&-s&0&0\\0&0&0&1&0&0\\0&0&0&0&1&0\\0&0&0&0&0&1
\end{array} \right],

\vspace*{5mm}

\\

\vspace*{2mm}

M_4^s\!=\!\!\!\left[ \begin{array}{cccccc}
1&0&0&0&0&0\\0&1&0&0&0&0\\0&0&e^s&0&0&0\\0&0&0&1&0&0\\0&0&0&0&1&0\\0&0&0&0&0&1
\end{array} \right],&
M_5^s\!=\!\!\!\left[ \begin{array}{cccccc}
e^s&0&0&0&0&0\\0&1&0&0&0&0\\0&0&1&0&0&0\\0&0&0&1&0&0\\0&0&0&0&1&0\\0&0&0&0&0&1
\end{array} \right],
&
M_6^s\!=\!\!\!\left[ \begin{array}{cccccc}
1&0&0&0&0&0\\0&e^s&0&0&0&0\\0&0&1&0&0&0\\0&0&0&1&0&0\\0&0&0&0&1&0\\0&0&0&0&0&1
\end{array} \right].
\end{array}
\end{small}
\end{eqnarray*}
respectively. If $X=\sum_{i=1}^6a_iX_i$, then we have
\begin{eqnarray*}
\begin{array}{lclclclclc}
&F^{s_6}_6\circ F^{s_5}_5\circ \cdots \circ F^{s_1}_1& \ : \ X \mapsto \ &e^{s_5}a_1X_1+e^{s_6}a_2X_2+e^{s_4}a_3X_3+(a_4-e^{s_4}s_3a_3)X_4+

 \\& & &\hspace{-22mm}(a_5-e^{s_5}s_1a_1)X_5+(a_6-e^{s_6}s_2a_2)X_6.&
\end{array}\end{eqnarray*}
Now, we try to vanish the coefficients of $X$ by acting the adjoint representations $M^{s_i}_i$ on $X$, by choosing suitable parameters $s_i$ in each step. Therefor we can simplify $X$ as follows:
\begin{itemize}
\item[] If $a_4\neq 0$, $a_5\neq 0$ and $a_6\neq 0$, we can make the coefficients of $X_3$, $X_1$ and $X_2$ vanish by $F^{S_3}_3$ , $F^{S_1}_1$ and $F^{S_2}_2$. By setting $s_3=-\frac{a_3}{a_4}$, $s_1=-\frac{a_1}{a_5}$ and $s_2=-\frac{a_2}{a_6}$ respectively. Scaling $X$ if necessary, we can assume that   $a_4=1$. So $X$ is reduced to the case  (1).

\item[] If $a_4\neq 0$, $a_5\neq 0$ and $a_6=0$, we can make the coefficients of $X_3$ and  $X_1$ vanish by $F^{S_3}_3$ and $F^{S_1}_1$; By setting $s_3=-\frac{a_3}{a_4}$ and $s_1=-\frac{a_1}{a_5}$, respectively. Also by setting $s_6=\mbox{ln}\vert a_2\vert$ in $F^{S_6}_6$, we can make the coefficient of $X_2$ vanish or $\pm 1$. Scaling $X$ if necessary, we can assume that   $a_4=1$. So $X$ is reduced to the case  (2).

\item[] If $a_4\neq 0$, $a_6\neq 0$ and $a_5=0$, we can make the coefficients of $X_3$ and  $X_2$ vanish by $F^{S_3}_3$ and $F^{S_2}_2$; By setting $s_3=-\frac{a_3}{a_4}$ and $s_2=-\frac{a_2}{a_6}$, respectively. Also by setting $s_5=\mbox{ln}\vert a_1\vert$ in $F^{S_5}_5$, we can make the coefficient of $X_1$ vanish or $\pm 1$. Scaling $X$ if necessary, we can assume that   $a_4=1$. So $X$ is reduced to the case (3).

\item[] If $a_4\neq 0$ and $a_5=a_6=0$, we can make the coefficient of $X_3$ vanish by $F^{S_3}_3$; By setting $s_3=-\frac{a_3}{a_4}$. Also by setting $s_5=\mbox{ln}\vert a_1\vert$ and $s_6=\mbox{ln}\vert a_2\vert$ in $F^{S_5}_5$ and $F^{S_6}_6$, we can make the coefficients of $X_1$ and $X_2$ vanish or $\pm 1$, respectively. Scaling $X$ if necessary, we can assume that   $a_4=1$. So $X$ is reduced to the case  (4).

\item[] If $a_5\neq0$, $a_6\neq 0$ and $a_4=0$, we can make the coefficients of $X_1$ and  $X_2$ vanish by $F^{S_1}_1$ and $F^{S_2}_2$; By setting $s_1=-\frac{a_1}{a_5}$ and $s_2=-\frac{a_2}{a_6}$, respectively. Also by setting $s_4=\mbox{ln}\vert a_3\vert$ in $F^{S_4}_4$, we can make the coefficient of $X_1$ vanish or $\pm 1$. Scaling $X$ if necessary, we can assume that   $a_5=1$. So $X$ is reduced to the case  (5).

\item[] If $a_5\neq 0$ and $a_4=a_6=0$, we can make the coefficient of $X_1$ vanish by $F^{S_1}_1$; By setting $s_1=-\frac{a_1}{a_5}$. Also by setting $s_6=\mbox{ln}\vert a_2\vert$ and $s_4=\mbox{ln}\vert a_3\vert$ in $F^{S_6}_6$ and $F^{S_4}_4$, we can make the coefficients of $X_2$ and $X_3$ vanish or $\pm 1$, respectively. Scaling $X$ if necessary, we can assume that   $a_5=1$. So $X$ is reduced to the case (6).

\item[] If $a_4=a_5=0$ and $a_6\neq 0$, we can make the coefficient of $X_2$ vanish by  $F^{S_2}_2$; By setting $s_2=-\frac{a_2}{a_6}$. Also by setting $s_5=\mbox{ln}\vert a_1\vert$ and $s_4=\mbox{ln}\vert a_3\vert$ in $F^{S_5}_5$ and $F^{S_4}_4$, we can make the coefficients of $X_1$  and $X_3$ vanish or $\pm 1$, respectively. Scaling $X$ if necessary, we can assume that   $a_6=1$. So $X$ is reduced to the case  (7).

\item[] If $a_4=a_5=a_6=0$, we can make the coefficients of $X_1$  and $X_3$ vanish or $\pm 1$ by $F^{S_5}_5$ and $F^{S_4}_4$; By setting $s_5=\mbox{ln}\vert a_1\vert$ and $s_4=\mbox{ln}\vert a_3\vert$, respectively.  So $X$ is reduced to the case (8).
\end{itemize}
\end{proof}
\section{Similarity Reduction of  (($2$D) Rf) Equation}
In this section, the two-dimensional Ricci flow equation will be reduced by expressing it in the new coordinates. The (($2$D) Rf) equation is expressed in the coordinates $(x,y,t,u)$, we must search for this equation's form in the suitable coordinates for reducing it. These new coordinates will be obtained by looking for  independent invariants $(z,w,f)$ corresponding to the generators of the symmetry group. Hence, By using the new coordinates and applying the chain rule, we obtain the reduced  equation.  We express this procedure for one of the infinitesimal generators in the optimal system $(\ref{7})$ and list the result for some other cases.

For example, consider the case (7) in theorem $(\ref{3.1})$ when $\varepsilon=1$ and $\varepsilon^{'}=0$, therefor we have $X:=X_1+X_6$. For determining independent invariants $I$, we ought to solve the PDEs $X(I)=0$, that is
\begin{equation}
(X_1+X_6)I=(\partial_x+y\partial_y-u\partial_u)I=\frac{\partial I}{\partial x}+y\frac{\partial I}{\partial y}+0\frac{\partial I}{\partial t}-u\frac{\partial I}{\partial u}=0.
\end{equation}
For solving this PDE, the following associated characteristic ODE must be solved:
\begin{equation}
\frac{dx}{1}=\frac{dy}{y}=\frac{dt}{0}=\frac{du}{-u}.
\end{equation}
Hence, three functionally independent invariants  $z=ye^{-x}$, $w=t$ and $f=uy$ are obtained. If we treat $f$ as a function of $z$ and $w$, we can compute formulae for the derivatives of $u$ with respect to   $x$, $y$ and $t$ in terms of $z$, $w$, $f$ and the derivatives of $f$ with respect to $z$ and $w$. By using the chain rule and the fact that $u=f(z,w)y^{-1}$ we have
 \begin{eqnarray}
\begin{array}{lclclclclc}
u_t=(f_zz_t+f_ww_t)y^{-1}=f_wy^{-1}, && u_x=-f_ze^{-x},\\
 u_y=f_ze^{-x}y^{-1}-fy^{-2}, &&u_{xy}=-e^{-2x}f_{zz}.
\end{array}
\end{eqnarray}
After substituting the above relations into the  equation (\ref{2}), we obtain:
 \begin{eqnarray}
\begin{array}{lclclclclc}
u^2u_t+u_yu_x-uu_{xy}=y^{-3}(f^2f_w-f^2_zz^2+ff_zz+ff_{zz}z^2)=0.
\end{array}
\end{eqnarray}
So the reduced equation is
 \begin{eqnarray}\label{12}
f^2f_w-z^2f^2_z+zff_z+z^2ff_{zz}=0.
\end{eqnarray}
This equation has two independent variables $z$ and $w$ and one dependent variable $f$. In a similar way, we can compute all of the similarity reduction equations corresponding to the infinitesimal symmetries that mentioned in theorem $(\ref{3.1})$. some of them are listed in Table 3.
\begin{table}[h]
\begin{small}\begin{center}
{\small \textbf{Table 3:} Lie invariants, similarity solutions and reduced equations.}

\begin{tabular}{lllllll}
\cline{1-5}
$i$ & $\mathfrak{h}_i$  & $\{z_i,w_i,v_i\}$   & $u_i$ & Similarity reduced equations \\
\cline{1-5}
$1$ & $X_1+X_6$ & $\{ye^{-x},t,uy\}$ & $\frac{f(z,w)}{y}$ & $f^2f_w-z^2f^2_z+zff_z+z^2ff_{zz}=0$
\vspace*{2mm}  \\
$2$ & $X_2+X_4$ & $\{x,te^{-y},ue^{-y}\}$ & $f(z,w)e^{y}$ & $f^2f_w-wf_zf_w+wff_{zw}=0$
\vspace*{2mm}  \\
$3$ & $X_3+X_5+aX_6$      & $\{\frac{y}{x^a},\mbox{ln}\frac{e^t}{x},ux^{a+1}\}$     & $\frac{f(z,w)}{x^{a+1}}$ & $f_w(f^2-f_z)-azf^{2}_z+f(af_z+azf_{zz}+f_{zw})=0$ \vspace*{2mm}  \\
$4$ & $X_5+aX_6$      & $\{\frac{y}{x^a},t,ux^{a+1}\}$  & $\frac{f(z,w)}{x^{a+1}}$ & $f^2f_w+a(ff_z-zf^{2}_z+zff_{zz})=0$
\vspace*{2mm}   \\
$5$ & $X_2+X_3+X_5$      & $\{\mbox{ln}\frac{e^y}{x},\mbox{ln}\frac{e^t}{x},ux\}$  & $\frac{f(z,w)}{x}$ & $f^2f_w-f^{2}_z-f_wf_z+ff_{zz}+ff_{zw}=0$
 \vspace*{2mm}\\
$6$ & $X_2+X_5$  & $\{\mbox{ln}\frac{e^y}{x},t,ux\}$     & $\frac{f(z,w)}{x}$
&$f^2f_w-f^2_z+ff_{zz}=0$
\vspace*{2mm}  \\
$7$ & $-X_3+X_6$       &$\{x,t+\mbox{ln}y,uy\}$     & $\frac{f(z,w)}{y}$  &$f^2f_w+f_zf_w-ff_{zw}=0$
 \vspace*{2mm} \\
$8$ & $X_1+aX_2$       & $\{y-ax,t,u\}$     & $f(z,w)$ &$f^2f_w-af^{2}_z+aff_{zz}=0$
 \vspace*{2mm}  \\
$9$ & $aX_2+X_3$       & $\{x,\frac{ta-y}{a},u\}$     & $f(z,w)$
&$af^2f_w-f_zf_w+ff_{zw}=0$
\vspace*{2mm} \\
\cline{1-5}
\end{tabular} \\[5mm]
\end{center}\end{small}
\end{table}

\section{Group Invariant Solutions of (($2$D) Rf) Equation }
In this section we  reduce the equations obtained in last section to ODEs and solve them.

For example, the equation $(\ref{12})$ admits a 4-parameters family of Lie operators with following infinitesimal generators
 \begin{eqnarray}
\begin{array}{lclclclclc}
V_1=\frac{1}{2}z\mbox{ln}z\partial_z+w\partial_w, && V_3=-\frac{1}{2}z\mbox{ln}z\partial_z+f\partial_f,\\
V_2=\partial_w, && V_4=z\partial_z.
\end{array}
\end{eqnarray}
The invariants associated to the infinitesimal generator $V_2$, are $s=z$ and $g=f$. By substituting these invariants into the equation $(\ref{12})$ and using chain rule, the reduced equation is obtained as follows:
 \begin{eqnarray}
sg'^2-gg'-sgg''=0
\end{eqnarray}
the solution of this equation is $g(s)= c_2s^{c_1}=c_2z^{c_1}$. therefore we have $f(z)=c_2z^{c_1}=c_2(ye^{-x})^{c_1}=c_2y^{c_1}e^{-c_1x}$. So $u=fy^{-1}=c_2y^{c_1-1}e^{-c_1x}$ is a solution of (($2$D) Rf) equation.

By similar arguments, we can obtain other invariant solutions of the equation $(\ref{12})$. Also by reducing other equations in Table 3, we can find other solutions of  (($2$D) Rf) equation. Some of the similarity reduced equations and their invariant solutions are listed in Table 4 and Table 5 respectively.

\begin{table}[h]
\begin{small}
\begin{center}
{\small \textbf{Table 4:} ODEs that obtain from the reduced equations of table 3. }

\begin{tabular}{llllllllll}
\cline{1-5}
$i$ & $\begin{array}{lcr} \mbox{ Simmetry\ group}\\ \mbox{generators} \end{array}$   & $\begin{array}{lcr} \mbox{Optimal}\\ \mbox{system} \end{array}$  & $\begin{array}{lcr} \mbox{Invariants}\\ \{s,g\} \end{array}$ &  $\begin{array}{lcr} \mbox{Reduced}\\ \mbox{equation} \end{array}$  \\
\cline{1-5}
$1$ & $  \begin{array}{lcr} V_1=\frac{1}{2}z\mbox{ln}z\partial_z+w\partial_w\\ V_2=\partial_w  \\ V_3=-\frac{1}{2}z\mbox{ln}z\partial_z+f\partial_f \\ V_4=z\partial_z \\ \end{array} $      & $\begin{array}{lcr} \mathcal{A}^1_1:V_2 \\ \mathcal{A}^2_1:V_3 \\ \mathcal{A}^3_1:V_1+V_3 \\ \mathcal{A}^4_1:V_2+V_4 \end{array}$  & $  \begin{array}{lcr}\{z,f\}\\ \{w,f(\mbox{ln}z)^2\}\\ \{z,\frac{f}{w}\}\\ \{w-\mbox{ln}z,f\}\end{array} $ & $  \begin{array}{lcr}sg'^2-gg'-sgg''=0\\ g^2(g'+2)=0\\g^3-s^2g'^2+sgg'+s^2gg''=0\\g^2g'-g'^2+gg''=0   \end{array} $
 \vspace*{2mm}
 \\

$2$ & $  \begin{array}{lcr} V_1=w \partial_w+f\partial_f\\ V_2=z\partial_z-f\partial_f \\ V_3=\partial_z \\ \end{array} $      & $\begin{array}{lcr} \mathcal{A}^1_2:V_2 \\ \mathcal{A}^2_2:V_3  \end{array}$     & $  \begin{array}{lcr} \{w,fz\}\\ \{w,f\}\end{array} $ &$ \begin{array}{lcr}g^2g'=0\\g^2g'=0 \end{array} $
 \vspace*{2mm}
  \\

 $3$ & $  \begin{array}{lcr} V_1=z\mbox{ln}z\partial_z+w\partial_w-\\ \hspace*{8mm}f(1+\mbox{ln}z)\partial_f\\ V_2=\partial_w \\ V_3=z\partial_z-f\partial_f \\ \end{array} $      & $\begin{array}{lcr} \mathcal{A}^1_3:V_1\\ \\ \mathcal{A}^2_3:V_2 \\ \mathcal{A}^3_3:V_3  \end{array}$     & $  \begin{array}{lcr} \{\frac{w}{\mbox{ln}z},fz\mbox{ln}z\}\\ \\ \{z,f\}\\ \{w,fz\}\end{array} $ &$ \begin{array}{lcr}gg'(g+2as-1)+ag^2+\\ s(as-1)(gg''-g'^2)=0\\ sg'^2-gg'-sgg''=0\\ g^2g'=0 \end{array}$
 \vspace*{2mm}
  \\

 $4$ & $  \begin{array}{lcr} V_1=z \partial_z+w\partial_w\\ V_2=\partial_w\\ V_3=-z \partial_z+f\partial_f\\ V_4=z(2-\mbox{ln}z)\partial_z+\\ \hspace*{8mm}f\mbox{ln}z \partial_f \\ \end{array} $      & $\begin{array}{lcr} \mathcal{A}^1_4:V_4 \\ \\ \mathcal{A}^2_4:V_1+V_3  \end{array}$     & $  \begin{array}{lcr} \{w,fz(4-\\ \mbox{ln}z(4-\mbox{ln}z))\}\\ \{z,\frac{f}{w}\}\end{array} $ &$ \begin{array}{lcr}g^2(g'+2a)=0\\ \\g^3-a(sg'^2+gg'+sgg'')=0 \end{array} $
 \vspace*{2mm}
  \\

 $5$ & $  \begin{array}{lcr} V_1=z\partial_z+w \partial_w-f\partial_f\\ V_2=\partial_w \\ V_3=\partial_z \\ \end{array} $      & $\begin{array}{lcr} \mathcal{A}^1_5:V_1 \\ \\ \mathcal{A}^2_5:V_2\\ \mathcal{A}^3_5:V_2-V_3 \end{array}$     & $  \begin{array}{lcr} \{\frac{w}{z},fz\}\\ \\ \{z,f\} \\ \{z+w,f\}\end{array} $ &$ \begin{array}{lcr}gg'(1-g-2s)-g^2+\\s(s-1)(g'^2-gg'')=0\\-g'^2+gg''=0\\g^2g'-2g'^2+2sgg''=0 \end{array} $
 \vspace*{2mm}
   \\

 $6$ & $  \begin{array}{lcr} V_1=\frac{z}{2} \partial_z+w\partial_w\\ V_2=\partial_w \\ V_3=-\frac{z}{2}\partial_z+f\partial_f\\ V_4=\partial_z \\ \end{array} $      & $\begin{array}{lcr}  \mathcal{A}^1_6:V_3\\ \mathcal{A}^2_6:V_1+V_3\\ \mathcal{A}^3_6:V_2+V_4  \end{array}$     & $  \begin{array}{lcr} \{w,fz^2\}\\ \{z,\frac{f}{w}\}\\ \{w-z,f\}\end{array} $ &$ \begin{array}{lcr}g'+2=0 \\ g^3-g'^2+gg''=0\\ g^2g'-g'^2+gg''=0\end{array} $
 \vspace*{2mm}
  \\

 $7$ & $  \begin{array}{lcr} V_1=w \partial_w\\ V_2=\partial_w\\V_3=z\partial_z-f\partial_f \\ V_4=\partial_z \\ \end{array} $      & $\begin{array}{lcr}  \mathcal{A}^1_7:V_4  \\ \mathcal{A}^2_7:V_2+V_3\end{array}$     & $  \begin{array}{lcr} \{w,f\}\\  \{w-\mbox{ln}z,zf\}\end{array} $ &$ \begin{array}{lcr}g'=0 \\ g^2g'-g'^2+gg''=0 \end{array} $
 \vspace*{2mm}
 \\

 $8$ & $  \begin{array}{lcr} V_1=\frac{z}{2} \partial_z+w\partial_w\\ V_2=\partial_w \\ V_3=-\frac{z}{2}\partial_z+f\partial_f\\ V_4=\partial_z \\ \end{array} $      & $\begin{array}{lcr} \mathcal{A}^1_8:V_2 \\ \mathcal{A}^2_8:V_3\\\mathcal{A}^3_8:V_2+V_4\\ \mathcal{A}^4_8:V_1-V_3  \end{array}$     & $  \begin{array}{lcr} \{z,f\}\\ \{w,fz^2\}\\ \{w-z,f\}\\ \{\frac{w}{z},fz\}\end{array} $ &$ \begin{array}{lcr}g'^2-gg''=0\\g^2(g'+2a)=0 \\g^2g'-ag'^2+agg''=0 \\g^2g'-as^2g'^2+2asgg'+\\ ag^2+as^2gg''=0\end{array} $ &
  \vspace*{2mm}
 \\

 $9$ & $  \begin{array}{lcr} V_1=w \partial_w\\ V_3=z\partial_z-f\partial_f \\V_2=\partial_w \\ V_4=\partial_z \\ \end{array} $      & $\begin{array}{lcr} \mathcal{A}^1_9:V_2+V_4\\ \mathcal{A}^2_9:V_2+V_3\end{array}$     & $  \begin{array}{lcr} \{w-z,f\}\\ \{w-\mbox{ln}z,zf\}\end{array} $ &$ \begin{array}{lcr}ag^2g'+g'^2-gg''=0 \\ ag^2g'+g'^2-gg''=0 \end{array} $
  \vspace*{2mm}\\
\cline{1-5}
\end{tabular} \\[5mm]
\end{center}
\end{small}
\end{table}

\begin{table}[h]
\begin{center}
{\small \textbf{Table 5:} Group invariant solutions of the  (($2$D) Rf) equation.}

\begin{tabular}{llllllllll}
\cline{1-3}
$\mathcal{A}^i_j$ &  {\small \mbox{ Invariant\ solution}} & {\small \mbox{Solution\  after\ substitiuting}}  \\
\cline{1-3}  \vspace*{2mm}
$\mathcal{A}^1_1$ & $c_2s^{c_1}$ & $c_2y^{c_1-1}e^{-c_1x}$
\\ \vspace*{2mm}
$\mathcal{A}^2_1$ & $-2s+c_1$ & $\frac{-2t+c_1}{y(\mbox{ln}y-x)^2}$
\\ \vspace*{2mm}
$\mathcal{A}^3_1$ & $\frac{1}{2c^2_1}(1-\mbox{tanh}(\frac{\mbox{ln}s-c_2}{2c_1})^2)$ & $\frac{t}{2c^2_1y}(1-\mbox{tanh}(\frac{\mbox{ln}y-x-c_2}{2c_1})^2)$
\\ \vspace*{2mm}
$\mathcal{A}^4_1$ & $\frac{c_1e^{c_1(s+c_2)}}{-1+e^{c_1(s+c_2)}}$ & $\frac{c_1e^{c_1(x+t-\mbox{ln}y+c_2)}}{-y+ye^{c_1(x+t-\mbox{ln}y+c_2)}}$
\\ \vspace*{2mm}
$\mathcal{A}^1_2$ &‌ $c_{1}$ & $\frac{c_1}{x}e^{y}$
\\ \vspace*{2mm}
$\mathcal{A}^2_2$ &‌ $c_1$ & $c_1e^{y}$
\\ \vspace*{2mm}
$\mathcal{A}^1_3$ &‌ $\frac{c_1(1+c_1)s^{c_1}}{-s^{c_1}(1+c_1-as)+a
c_1c_2(1+c_1)(as-1)^{c_1+1}}$ & $\frac{c_1(1+c_1)(\frac{t-\mbox{ln}x}{\mbox{ln}y-\mbox{ln}x})^{c_1}}{xy\mbox{ln}\frac{y}{x}(-(\frac{t-\mbox{ln}x}{\mbox{ln}y-\mbox{ln}x})^{c_1}(1+c_1-\frac{at-a\mbox{ln}x}{\mbox{ln}y-\mbox{ln}x})+ac_1c_2(1+c_1)(\frac{at-a\mbox{ln}x}{\mbox{ln}y-\mbox{ln}x}-1)^{c_1+1})}$
\\ \vspace*{2mm}
$\mathcal{A}^2_3$ & $c_2s^{c_1}$ & $\frac{c_2}{x^2}(\frac{y}{x})^{c_1}$
\\ \vspace*{2mm}
$\mathcal{A}^3_3$ & $c_1$ & $\frac{c_1}{yx}$
\\\vspace*{2mm}
$\mathcal{A}^1_4$ & $-2as+c_1$ & $\frac{-2at+c_1}{xy(4-4\mbox{ln}\frac{y}{x}+(\mbox{ln}\frac{y}{x})^2)}$
\\\vspace*{2mm}
$\mathcal{A}^2_4$ & $\frac{1}{2c_1s}(1-\mbox{tanh}(\frac{\mbox{ln}s+c_2}{2\sqrt{ac_1}})^2)$ & $\frac{t}{2c_1xy}(1-\mbox{tanh}(\frac{\mbox{ln}y-\mbox{ln}x+c_2}{2\sqrt{ac_1}})^2)$
\\\vspace*{2mm}
$\mathcal{A}^1_5$ & ‌ $\frac{c_1(1+c_1)s^{c_1}}{-s^{c_1}(1+c_1-s)+c_1c_2(1+c_1)(s-1)^{c_1+1}}$ & $\frac{c_1(1+c_1)(\frac{t-\mbox{ln}x}{y-\mbox{ln}x})^{c_1}}{x(y-\mbox{ln}x)(-(\frac{t-\mbox{ln}x}{y-\mbox{ln}x})^{c_1}(1+c_1-\frac{t-\mbox{ln}x}{y-\mbox{ln}x})+c_1c_2(1+c_1)(\frac{t-y}{y-\mbox{ln}x})^{c_1+1})}$
\\\vspace*{2mm}
$\mathcal{A}^2_5$ & $c_2e^{c_1s}$ & $\frac{c_2}{x}e^{c_1(y-\mbox{ln}x)}$
\\\vspace*{2mm}
$\mathcal{A}^3_5$ & $\frac{2c_1e^{c_1(s+c_2)}}{-1+e^{c_1(s+c_2)}}$  & $\frac{2c_1e^{c_1(t+y-2\mbox{ln}x+c_2)}}{-x+xe^{c_1(t+y-2\mbox{ln}x+c_2)}}$
\\\vspace*{2mm}
$\mathcal{A}^1_6$ & $-2s+c_1$ & $\frac{-2t+c_1}{x(y-\mbox{ln}x)^2}$
\\\vspace*{2mm}
$\mathcal{A}^2_6$ & $\frac{1}{2c^2_1}(1-\mbox{tanh}(\frac{s+c_2}{2c_1})^2)$ & $\frac{t}{2c^2_1x}(1-\mbox{tanh}(\frac{y-\mbox{ln}x+c_2}{2c_1})^2)$
\\\vspace*{2mm}
$\mathcal{A}^3_6$ & $\frac{c_1e^{c_1(s+c_2)}}{-1+e^{c_1(s+c_2)}}$  & $\frac{c_1e^{c_1(t-y+\mbox{ln}x+c_2)}}{-x+xe^{c_1(t-y+\mbox{ln}x+c_2)}}$
\\\vspace*{2mm}
$\mathcal{A}^1_7$  & $c_1$ & $\frac{c_1}{y}$
\\\vspace*{2mm}
$\mathcal{A}^2_7$ & $\frac{c_1e^{c_1(s+c_2)}}{-1+e^{c_1(s+c_2)}}$ & $\frac{c_1e^{c_1(\mbox{ln}y-\mbox{ln}x+t+c_2)}}{xy(-1+e^{c_1(\mbox{ln}y-\mbox{ln}x+t+c_2)})}$
\\\vspace*{2mm}
$\mathcal{A}^1_8$ &  $c_2e^{c_1s}$ & $c_2e^{c_1(y-ax)}$
\\\vspace*{2mm}
$\mathcal{A}^2_8$ & $-2as+c_1$ & $\frac{-2at+c_1}{(y-ax)^2}$
\\\vspace*{2mm}
$\mathcal{A}^3_8$ &  $\frac{c_1ae^{c_1(s+c_2)}}{-1+e^{c_1(s+c_2)}}$ & $\frac{c_1ae^{c_1(ax-y+t+c_2)}}{-1+e^{c_1(ax-y+t+c_2)}}$
\\\vspace*{2mm}
$\mathcal{A}^4_8$ & $\frac{-c^2_1ae^{\frac{c_1}{s}}}{e^{\frac{c_1}{s}}(c_1-s)-sac^2_1c_2}$ & $\frac{-c^2_1ae^{\frac{c_1(y-ax)}{t}}}{e^{\frac{c_1(y-ax)}{t}}(c_1y-c_1ax-t)-c^2_1c_2at}$
\\\vspace*{2mm}
$\mathcal{A}^1_9$ & $\frac{c_1e^{c_1(s+c_2)}}{1-ae^{c_1(s+c_2)}}$ & $\frac{c_1e^{c_1(t-x-\frac{y}{a}+c_2)}}{1-ae^{c_1(t-x-\frac{y}{a}+c_2)}}$
\\\vspace*{2mm}
$\mathcal{A}^2_9$ & $\frac{c_1e^{c_1(s+c_2)}}{1-ae^{c_1(s+c_2)}}$ & $\frac{c_1e^{c_1(t-\mbox{ln}x-\frac{y}{a}+c_2)}}{x-axe^{c_1(t-\mbox{ln}x-\frac{y}{a}+c_2)}}$
\\
\cline{1-3}
\end{tabular} \\[5mm]
\end{center}
\end{table}
\section*{Conclusion}
In this paper, by using the adjoint representation of the symmetry group on its Lie algebra, we have constructed an optimal system of one-dimensional subalgebras of the  two-dimensional Ricci flow equation. Moreover, we have obtained the similarity reduced equations for each element of optimal system as well as some group invariant solutions of two-dimensional Ricci flow equation.
\begin{small}

\end{small}
\end{document}